\documentclass[12pt,reqno]{amsart}
\usepackage{amsmath, amssymb, latexsym, amscd, amsthm,amsfonts,amstext}
\usepackage{amsmath, amssymb, latexsym, amscd, amsthm,amsfonts,amstext}
\usepackage[mathscr]{eucal}
\setlength{\textwidth}{160true mm}
\setlength{\textheight}{225true mm}
\setlength{\topmargin}{0true mm}
\setlength{\oddsidemargin}{3true mm}
\setlength{\evensidemargin}{3true mm}
\parindent=10pt
\parskip3pt
\usepackage{graphics}
\theoremstyle{plain}
\newtheorem{theorem}{Theorem}[section]

\newtheorem{proposition}[theorem]{Proposition}
\newtheorem{claim}[theorem]{\indent\sc Claim}
\theoremstyle{definition} 

\newcommand{\C}{{\mathbf{C}}}

\newcommand{\rank}{{\mathrm{rank}\,}}

\renewcommand{\P}{{\mathbf{P}}}

\renewcommand{\log}{\mathrm{log}}

\newcommand{\Z}{\mathbf{Z}}
\numberwithin{equation}{section}

\title[Two meromorphic mappings having the same inverse images]{Two meromorphic mappings having the same inverse images of moving hyperplanes} 

\begin{document}

\date{} 
\author{Si Duc Quang}
\address{Department of Mathematics\\
Hanoi National University of Education\\
136 XuanThuy str., Hanoi, Vietnam}
\email{quangsd@hnue.edu.vn}

\date{} 
\author{Le Ngoc Quynh}
\address{Faculty of Education\\
An Giang University\\
18 Ung Van Khiem, Dong Xuyen, Long Xuyen, An Giang, Vietnam}
\email{nquynh1511@gmail.com}

\thanks{This research was supported in part by a NAFOSTED grant of Vietnam (Grant No. 101.04-2015.03)}

\subjclass[2010]{Primary 32H04; Secondary  32A22, 32A35.}
\keywords{Nevanlinna theory, algebraic degeneracy, truncated multiplicity, hyperplane.}

\maketitle

\begin{abstract} 
In this paper, we will show that if two meromorphic mappings $f$ and $g$ of $\C^m$ into $\P^n(\C)$ have the same inverse images for $(2n+2)$ moving hyperplanes $\{a_i\}_{i=1}^{2n+2}$ with multiplicities counted to level $l_0$ then the map $f\times g$ must be algebraically degenerated over the field $\mathcal R\{a_i\}_{i=1}^{2n+2}$, where $l_0=3n^3(n+1)q(q-2)$ with $q=\binom{2n+2}{n+2}$. Our result generalizes the previous result for fixed hyperplanes case of Fujimoto and also improves his result by giving an explicit estimate for the number $l_0$. 
\end{abstract}

\section{Introduction}

In 1975, Fujimoto \cite{Fu75} showed that if two linearly nondegenerate meromorphic mappings $f$ and $g$ of $\C^m$ into $\P^n(\C)$ have the same inverse images of $3n+2$ hyperplanes in general position with counting multiplicity then $f=g$. In 1999, Fujimoto \cite{Fu99} considered the case where these inverse images are counted with multiplicities truncated by a level $l_0$. He proved the following theorem, in which the number $q$ of hyperplanes is also reduced.

\vskip0.2cm
{\sc Theorem A} (Fujimoto \cite[Theorem 1.5]{Fu99}).\ {\it Let $H_1,\ldots,H_{2n+2}$ be hyperplanes of $\P^n(\C)$ in general position. Then there exists an integer $l_0$ such that, for any two meromorphic mappings $f$ and $g$ of $\C^m$ into $\P^n(\C)$, if $\min(\nu_{(f,H_i)},l_0)=\min(\nu_{(g,H_i)},l_0)\ (1\le i\le 2n+2)$ then the mapping $f\times g$ into $\P^n(\C)\times \P^n(\C)$ is algebraically degenerate. }

\vskip0.2cm
Here, $f\times g$ is a mapping from $\C^m$ into $\P^n(\C)\times \P^n(\C)$ defined by
$$ (f\times g)(z)=(f(z),g(z))\in \P^n(\C)\times \P^n(\C)$$
for all $z$ outside the union of the indeterminacy loci of $f$ and $g$, $\nu_{(f,H_i)}$ is the pullback of divisor $H_i$ by $f$.  We also say that a meromorphic mapping into a projective variety is algebraically degenerate if its image is included in a proper analytic subset of the projective variety, otherwise it is algebraically non-degenerate.

We would like to emphasize here that the method used in the proof of Fujimoto is mainly base on the notion of logarithmic derivatives weight of rational functions. Hence his proofs is quite complicate and the number $l_0$ can not be estimated explicit. Then the following two questions arise naturally: 
\begin{itemize}
\item[1.] Are there any similar results to the above results of Fujimoto in the case where fixed hyperplanes are replaced by moving hyperplanes?
\item[2.] Does there exists an explicit estimate for the number $l_0$?
\end{itemize}
In this paper, by introducing new method, we will extend the above result of Fujimoto to the case of moving targets. We also simplify his proof and improve his result by giving an explicit estimate for the number $l_0$.  To state our result, we first recall the following. 

Let $f$ be a meromorphic mappings of $\C^m$ into $\P^n(\C)$ and let $a$ be a meromorphic mappings of $\C^m$ into $\P^n(\C)^*$. We say that $a$ is \textit{slowly moving hyperplanes} or \textit{slowly moving target} of $\P^n(\C)$ with respect to $f$ if $||\ T(r,a) = o(T(r,f))$ as $r \to \infty$ (see Section 1 for the notations). Similarly, a meromorphic function $\varphi$ on $\C^m$ is said to be ``small'' with respect to $f$ if $||\ T(r,\varphi) = o(T(r,f))$ as $r \to \infty$. Suppose that $f$ and $a$ have reduced representations $(f_0:\cdots :f_n)$ and $(a_0:\cdots :a_n)$ respectively. We set $(f,a)=\sum_{i=0}^na_if_i$. Hence the divisor of zeros $\nu_{(f,a)}$ of the function $(f,a)$ does not depend on the choice of these representations.

Let $a_1,\ldots,a_q$ $(q \geq n+1)$ be $q$ moving hyperplanes of $\P^n(\C)$ with reduced representations $a_i = (a_{i0}: \dots : a_{in})\ (1\le i \le q).$
We say that $a_1,\dots,a_q$ are located in general position if $\det (a_{i_kl}) \not \equiv 0$ for any $1\le i_0<i_1<\cdots<i_n\le q.$
We denote by $\mathcal {M}$ the field of all meromorphic functions on $\C^m$ and $\mathcal {R}\{a_i\}_{i=1}^q$ the smallest subfield of $\mathcal {M}$ which contains $\C$ and all $ {a_{jk}}/{a_{jl}}\text { with } a_{jl}\not\equiv 0$.

Let $N$ be a positive integer and let $V$ be a projective subvariety of $\P^N(\C)$. Take a homogeneous coordinates $(\omega_0:\cdots :\omega_N)$ of $\P^N(\C)$. Let $F$ be a meromorphic mapping of $\C^m$ into $V$ with a representation $F=(F_0:\cdots:F_N)$. 

\vskip0.2cm
{\sc Definition B.}{\it The meromorphic mapping $F$ is said to be algebraically degenerate over a subfield $\mathcal R$ of $\mathcal M$ if there exists a homogeneous polynomial $Q\in \mathcal R[\omega_0,\ldots,\omega_N]$ with the form
$$ Q(z)(\omega_0,\ldots,\omega_N)=\sum_{I\in\mathcal I_{d}}a_I(z)\omega^I, $$
 where $d$ is an integer, $\mathcal I_d=\{(i_0,\ldots,i_N)\ ;\ 0\le i_j\le d,\sum_{j=0}^Ni_j=d\},$ $a_I\in\mathcal R$ and $\omega^I=\omega_0^{i_0}\cdots\omega_N^{i_N}$ for $I=(i_0,\ldots,i_N)$, such that

$\mathrm{(i)}$\ $Q(z)(F_0(z),\ldots,F_N(z))\equiv 0\ \text{ on } \C^m,$

$\mathrm{(ii)}$ there exists $z_0\in\C^m$ with $Q(z_0)(\omega_0,\ldots,\omega_N)\not\equiv 0 \text{ on }V$.}

Now let $f$ and $g$ be two meromorphic mappings of $\C^m$ into $\P^n(\C)$ with representations
$$ f=(f_0:\cdots f_n)\text{ and }g=(g_0:\cdots :g_n). $$
We consider $\P^n(\C)\times \P^n(\C)$ as a projective subvariety of $\P^{(n+1)^2-1}(\C)$ by Segre embedding. Then the map $f\times g$ into $\P^n(\C)\times \P^n(\C)$ is algebraically degenerate over a subfield $\mathcal R$ of $\mathcal M$ if there exists a nontrivial polynomial
$$ Q(z)(\omega_0,\ldots,\omega_n,\omega_0',\ldots,\omega_n')=\sum_{\underset{i_0+\cdots +i_n=d}{I=(i_0,\ldots,i_n)\in \Z^{n+1}_{+}}}\sum_{\underset{j_0+\cdots +j_n=d'}{J=(j_0,\ldots,j_n)\in \Z^{n+1}_{+}}}a_{IJ}(z)\omega^{I}\omega'^{J}, $$
where $d,d'$ are positive integers, $a_{IJ}\in\mathcal R$, such that
$$ Q(z)(f_0(z),\ldots,f_n(z),g_0(z),\ldots,g_n(z))\equiv 0. $$

Our main theorem is stated as follows.

\begin{theorem}\label{1.1}
Let $f$ and $g$ be two meromorphic mappings of $\C^m$ into $\P^n(\C)$. Let $a_1,\ldots,a_{2n+2}$ be $(2n+2)$ meromorphic mappings of $\C^m$ into $\P^n(\C)^*$ in general position, which are slow with respect to $f$ and $g$. Let $l_0$ be a positive positive or infinite. Assume that $\min (\nu_{(f,a_i)},l_0)=\min (\nu_{(g,a_i)},l_0)\ (1\le i\le 2n+2)$. If $l_0\ge 3n^3(n+1)q(q-2)$, where $q=\binom{2n+2}{n+2}$, then the map $f\times g$ into $\P^n(\C)\times \P^n(\C)$ is algebraically degenerate over $\mathcal R\{a_i\}_{i=1}^{2n+2}$.
\end{theorem}

We would also like to emphasize here that there are many results on finiteness and degeneracy problem of meromorphic mappings with fixed or moving hyperplanes given by Ru \cite{Ru01}, Tu \cite{Tu02}, Thai-Quang \cite{TQ05}, Dethloff-Tan \cite{DT}, Giang-Quynh-Quang \cite{GQQ}\cite{QQ} and others. However, in all their results, they need two aditional assumption as follows.

(i) $\dim\{(f,a_i)=(f,a_j)=0\}\le m-2$ for all $i\ne j$.

(ii) $f=g$ on the union $\bigcup_{i}(f,a_i)^{-1}(0)$.\\
These assumptions play essential roles in their proof and are very complicate to verify. In our result, these assumptions are omitted.

{\sc Acknowledgments.} The research of the authors was supported in part by a NAFOSTED grant of Vietnam (Grant No. 101.04-2015.03).

\section{Basic notions and auxiliary results from Nevanlinna theory}

\noindent
{\bf (a)}\ We set $||z|| = \big(|z_1|^2 + \dots + |z_m|^2\big)^{1/2}$ for
$z = (z_1,\dots,z_m) \in \C^m$ and define
\begin{align*}
B(r) := \{ z \in \C^m ; ||z|| < r\},\quad
S(r) := \{ z \in \C^m ; ||z|| = r\}\ (0<r<\infty).
\end{align*}

Define 
$$v_{m-1}(z) := \big(dd^c ||z||^2\big)^{m-1}\quad \quad \text{and}$$
$$\sigma_m(z):= d^c \text{log}||z||^2 \land \big(dd^c \text{log}||z||^2\big)^{m-1}
 \text{on} \quad \C^m \setminus \{0\}.$$

A divisor $\nu$ on $\C^m$ is given by a formal sum $\nu=\sum\nu_{\mu}X_{\mu}$, where $\{X_\mu\}$ is a locally finite family of analytic hypersurfaces in $\C^m$ and $\nu_{\mu}\in \Z$. We may assume that $X_\mu$ are irreducible and distinct to each other, and $\nu_\mu \not= 0$ for all $\mu$. Then the set $\mathrm{Supp} (\nu)=\bigcup_\mu X_\mu $ is called the support of $\nu$. Sometimes, we identify the divisor $\nu$ with the function $\nu (z)$ from $\C^m$ into $\Z$ defined as follows: 
$$ \nu (z)=\sum_{X_\mu\ni z}\nu_\mu. $$
For positive integers $k,M$ or $M= \infty$, we set $\nu^{[M]}(z)=\min\ \{M,\nu(z)\}$ and define the counting function of $\nu$ by
\begin{align*}
\nu_{>k}^{[M]}(z)&= 
\begin{cases}
\nu^{[M]}(z)& \text { if } \nu(z)> k,\\
0 & \text { if } \nu(z)\le k,
\end{cases}\\
n(t)&=
\begin{cases}
\int\limits_{|\nu|\,\cap B(t)}
\nu(z) v_{m-1} & \text  { if } m \geq 2,\\
\sum\limits_{|z|\leq t} \nu (z) & \text { if }  m=1. 
\end{cases}
\end{align*}
Similarly, we define $n^{[M]}(t)$ and $n_{>k}^{[M]}(t).$

Define
$$ N(r,\nu)=\int_1^r \dfrac {n(t)}{t^{2m-1}}dt \quad (1<r<\infty).$$
Similarly, we define  $N(r,\nu^{[M]}), \ N(r,\nu_{>k}^{[M]})$
and denote them by $N^{[M]}(r,\nu), \ N_{>k}^{[M]}(r,\nu)$, respectively. For brevity, we will omit the character $^{[M]}$ if $M=\infty$.

Let $\varphi : \C^m \rightarrow \C $ be a nonzero meromorphic function. Denote by $\nu^0$ the divisor of zeros of $\varphi$ and set
$$N_{\varphi}(r)=N(r,\nu^0_{\varphi}), \ N_{\varphi}^{[M]}(r)=N^{[M]}(r,\nu^0_{\varphi}), \ N_{\varphi,> k}^{[M]}(r)=N_{>k}^{[M]}(r,\nu^0_{\varphi}).$$

\noindent
{\bf (b)}\ Let $f : \C^m \rightarrow \P^n(\C)$ be a meromorphic mapping.
For arbitrarily fixed homogeneous coordinates
$(\omega_0 : \cdots : \omega_n)$ on $\P^n(\C)$, we take a reduced representation
$f = (f_0 : \cdots : f_n)$, which means that each $f_i$ is a  
holomorphic function on $\C^m$ and 
$f(z) = \big(f_0(z) : \dots : f_n(z)\big)$ outside the analytic set
$\{ f_0 = \cdots = f_n= 0\}$ of codimension at least $2$.
Set $\Vert f \Vert = \big(|f_0|^2 + \dots + |f_n|^2\big)^{1/2}$.
The characteristic function of $f$ is defined by 
\begin{align*}
T(r,f) = \int\limits_{S(r)} \text{log}\Vert f \Vert \sigma_n -
\int\limits_{S(1)}\text{log}\Vert f\Vert \sigma_n.
\end{align*}

Let $a$ be a meromorphic mapping of $\C^m$ into $\P^n(\C)^*$ with reduced representation
$a = (a_0 : \dots : a_n)$. Setting $(f,a):=a_0f_0+\cdots +a_nf_n$. If $(f,a)\not \equiv 0,$ then we define
$$m_{f,a}(r)=\int\limits_{S(r)} \text{log}\dfrac {||f||\cdot ||a||}{|(f,a)|}\sigma_m -
\int\limits_{S(1)}\text{log}\dfrac {||f||\cdot ||a||}{|(f,a)|}\sigma_m,$$
where $\Vert a \Vert = \big(|a_0|^2 + \dots + |a_n|^2\big)^{1/2}$.

The first main theorem for moving hyperplanes (see \cite{NO}) states
$$T(r,f)+T(r,a)=m_{f,a}(r)+N_{(f,a)}(r).$$

Let $\varphi$ be a nonzero meromorphic function on $\C^m$, which are occasionally regarded as a meromorphic map into $\P^1(\C)$. The proximity function of $\varphi$ is defined by
$$m(r,\varphi):=\int_{S(r)}\log \max\ (|\varphi|,1)\sigma_m.$$

\noindent
{\bf (c)} As usual, by the notation $``|| \ P"$,  we mean the assertion $P$ holds for all $r \in [0,\infty)$ excluding a Borel subset $E$ of the interval $[0,\infty)$ with $\int_E dr<\infty$.

The following play essential roles in Nevanlinna theory.
\begin{theorem}[{\cite[Theorem 2.1]{RuW} and \cite[Theorem 2]{TQ08}}]\label{2.1}
Let $f=(f_0:\cdots :f_n)$ be a reduced representation of a meromorphic mapping $f$ of $\C^m$ into $\P^n(\C)$. Assume that $f_{n+1}$ is a holomorphic function with $f_0+\cdots +f_n+f_{n+1}=0$. If $\sum_{i\in I}f_i\ne 0$ for all $I\subsetneq \{0,\ldots,n+1\}$, then
$$ ||\ T(r,f)\le\sum_{i=0}^{n+1}N_{f_i}^{[n]}(r)+o(T(r,f)). $$
\end{theorem}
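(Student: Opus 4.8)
The plan is to recognise the asserted bound as a truncated Second Main Theorem of Cartan type for the map $f=(f_0:\cdots:f_n)$ measured against the $n+2$ hyperplanes
\[
H_i=\{w_i=0\}\ (0\le i\le n),\qquad H_{n+1}=\{w_0+\cdots+w_n=0\},
\]
for which $(f,H_i)=f_i$ $(0\le i\le n)$ and $(f,H_{n+1})=f_0+\cdots+f_n=-f_{n+1}$, so that $N_{f_i}^{[n]}(r)=N^{[n]}(r,\nu^0_{(f,H_i)})$ throughout. One checks at once that these $n+2$ forms are in general position in $\P^n(\C)$. I would emphasise at the outset that the hypothesis ``$\sum_{i\in I}f_i\ne 0$ for every proper $I$'' does \emph{not} force $f_0,\dots,f_n$ to be linearly independent over $\C$, so the argument cannot simply quote Cartan's theorem; the whole difficulty is concentrated in the possibly degenerate case.

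To treat both cases uniformly I would pass to the linear span. Let $V\subset\mathcal M$ be the $\C$-vector space generated by $f_0,\dots,f_n$, say $\dim V=k+1\le n+1$; since $f_{n+1}=-\sum_{i\le n}f_i\in V$, the image of $f$ lies in a linearly embedded $\P^k(\C)\subset\P^n(\C)$ and $f$ factors through a \emph{linearly nondegenerate} map $\hat f:\C^m\to\P^k(\C)$ with $T(r,f)=T(r,\hat f)+O(1)$. The coordinate forms and the sum form restrict to $n+2$ linear forms $\tilde H_0,\dots,\tilde H_{n+1}$ on $V$, each evaluated at $\hat f$ to give $f_i$ up to sign. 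The key structural step is then to show that there exist $k+2$ indices $i_0<\cdots<i_{k+1}$ for which $\tilde H_{i_0},\dots,\tilde H_{i_{k+1}}$ are in general position in $\P^k(\C)$. Granting this, Cartan's truncated Second Main Theorem applied in $\P^k(\C)$ to $\hat f$ and these $s=k+2$ hyperplanes gives, with coefficient $s-k-1=1$,
\[
||\quad T(r,\hat f)\le \sum_{t=0}^{k+1}N^{[k]}_{f_{i_t}}(r)+o(T(r,\hat f)),
\]
and since $N^{[k]}_{f_{i_t}}\le N^{[n]}_{f_{i_t}}$ and $T(r,f)=T(r,\hat f)+O(1)$, the right-hand sum is dominated by $\sum_{i=0}^{n+1}N^{[n]}_{f_i}(r)$, which is exactly the assertion.

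The analytic engine behind the Cartan step is standard but must be installed in the several–variable setting. I would form a generalized (logarithmic) Wronskian $W=W(f_{i_0},\dots,f_{i_k})$ of a basis of $V$ among the chosen functions, built from a suitable family of partial differential operators on $\C^m$ in the style of Stoll and Vitter; it is not identically zero precisely because $\hat f$ is nondegenerate. A purely local computation shows that at each point the order of $\prod_t(f,\tilde H_{i_t})$ not absorbed by $\nu^0_W$ is truncated at level $k$, and the several–variable logarithmic derivative lemma applied to the Wronskian quotient yields
\[
||\quad T(r,\hat f)+N_W(r)\le \sum_{t=0}^{k+1}N^{[k]}_{f_{i_t}}(r)+o(T(r,\hat f));
\]
discarding the nonnegative term $N_W(r)$ gives the inequality used above. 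I would take care that every error term is genuinely $o(T(r,f))$ off a set of finite measure, which is where the $``||"$ qualifier enters.

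The main obstacle is the structural step, namely producing $k+2$ restricted forms in general position, and this is where the subsum hypothesis is indispensable. If the $f_i$ occupied only $k+1$ distinct directions in $V$, then grouping the indices by direction and using that a basis of $V$ is linearly independent would force each group-sum $\sum_{i\in\mathrm{group}}f_i$ to vanish, contradicting the hypothesis; hence at least $k+2$ distinct directions occur. A careful extremal selection, again leaning on the no-vanishing-subsum property to prevent every $(k+2)$-subset from being degenerate, should then produce the desired general-position family. Establishing this combinatorial lemma in full, and checking that the truncation level never exceeds $n$ after the reduction, is the part I expect to require the most care.
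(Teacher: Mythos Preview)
The paper does not give its own proof of this statement: Theorem~2.1 is quoted verbatim from the references \cite{RuW} and \cite{TQ08} and is used as a black box. There is therefore no argument in the paper to compare your sketch against.

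That said, let me comment on your proposal itself. The reduction to Cartan's truncated Second Main Theorem via the $n+2$ hyperplanes $\{w_i=0\}$ and $\{w_0+\cdots+w_n=0\}$ is exactly right, and in the linearly nondegenerate case your outline is complete: the generalized Wronskian machinery in several variables, together with the logarithmic derivative lemma, yields the inequality with truncation level $n$ directly.

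The genuine weak point is the degenerate case, and you flag it honestly. Your argument that the $f_i$ must occupy at least $k+2$ distinct directions in $V$ is correct (fewer directions would force a group-sum to vanish, contradicting the hypothesis). But the further claim---that one can always extract $k+2$ of the $f_i$ in \emph{general position} in $\P^k$---is not established, and the phrase ``a careful extremal selection \ldots\ should then produce the desired general-position family'' is where the proof currently stops rather than where it finishes. Having many distinct directions does not by itself prevent every $(k+2)$-tuple from containing a dependent $(k+1)$-subset, and the no-vanishing-subsum hypothesis constrains only \emph{sums with unit coefficients}, not arbitrary linear relations, so it is not clear how to leverage it to rule out all such dependencies. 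This combinatorial step is the heart of the matter and would need a full argument (or a counterexample) before the proof can be considered complete. The proofs in \cite{RuW} and \cite{TQ08} proceed along different lines---via a direct Wronskian/second-main-theorem estimate that does not pass through such an extraction lemma---so if you want a self-contained argument you should either supply the missing combinatorics or follow their route instead.
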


\begin{theorem}[{\cite{NO} and \cite[Theorem 5.5]{Fu99}}]\label{2.2}
Let $f$ be a nonzero meromorphic function on $\C^m.$ Then 
$$\biggl|\biggl|\quad m\biggl(r,\dfrac{\mathcal{D}^\alpha (f)}{f}\biggl)=O(\log^+T(r,f))\ (\alpha\in \Z^m_+).$$
\end{theorem}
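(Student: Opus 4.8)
\smallskip
\noindent\textit{Proof strategy.}
The plan is to reduce the several--variable estimate to the classical one--variable Lemma on the Logarithmic Derivative, namely that for every nonzero meromorphic function $\phi$ on $\C$ one has $m_1(r,\phi'/\phi)=O(\log^+ T_1(r,\phi)+\log r)$ outside a set of finite Lebesgue measure (here $m_1,T_1$ denote the one--variable proximity and characteristic functions), and then to climb from first--order derivatives to an arbitrary multi--index $\alpha$ by an elementary induction. The $\log r$ produced by the one--variable bound is subsumed into the stated $O(\log^+ T(r,f))$ as is standard under the present normalization of $T$.

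First I would dispose of the passage from $|\alpha|=1$ to general $\alpha$, granting the first--order case. Write $\mathcal{D}^\alpha=\partial_j\circ\mathcal{D}^\beta$ with $\alpha=\beta+e_j$ and set $h=\mathcal{D}^\beta f$. Since $\log^+|\varphi\psi|\le \log^+|\varphi|+\log^+|\psi|$, the proximity function is subadditive on products, so from
$$\frac{\mathcal{D}^\alpha f}{f}=\frac{\partial_j h}{h}\cdot\frac{\mathcal{D}^\beta f}{f}$$
one gets $m(r,\mathcal{D}^\alpha f/f)\le m(r,\partial_j h/h)+m(r,\mathcal{D}^\beta f/f)$. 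The first--order case applied to $h$ bounds the first term by $O(\log^+T(r,h))$ and the induction hypothesis bounds the second by $O(\log^+T(r,f))$. It then remains to check $T(r,h)=O(T(r,f))$ off an exceptional set: the poles of $h=\mathcal{D}^\beta f$ lie over those of $f$ with multiplicity raised by at most $|\beta|$, whence $N(r,\nu_h^\infty)\le(1+|\beta|)N(r,\nu_f^\infty)\le(1+|\beta|)T(r,f)$, while $m(r,h)\le m(r,f)+m(r,\mathcal{D}^\beta f/f)+O(1)=O(T(r,f))$ by the induction hypothesis. Adding these gives $T(r,h)=O(T(r,f))$, hence $\log^+T(r,h)=O(\log^+T(r,f))$, and the induction closes. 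Everything is thereby reduced to $|\alpha|=1$.

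For the base case I would slice $\C^m$ by the complex lines through the origin. For a unit vector $w$ the restriction $f_w(\zeta):=f(\zeta w)$ is a nonzero meromorphic function on $\C$ with $f_w'(\zeta)=\sum_k w_k(\partial_k f)(\zeta w)$, so the one--variable lemma controls the radial logarithmic derivative $f_w'/f_w=\sum_k w_k(\partial_k f/f)(\zeta w)$. Because $\sigma_m$ is invariant under the unitary group and $S(r)$ fibers over $\P^{m-1}(\C)$ with circle fibers carrying normalized arclength, the proximity function disintegrates over directions,
$$\int_{S(r)}u\,\sigma_m=\int_{\P^{m-1}(\C)}\left(\frac{1}{2\pi}\int_0^{2\pi}u(re^{i\theta}w)\,d\theta\right)d\Omega(w),$$
with $\Omega$ the Fubini--Study probability measure. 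The hard part is that the lines only see the \emph{radial} derivative $\sum_k w_k\partial_k f$, whereas the target is the individual partial $\partial_j f$. To bridge this I would pass to the gradient norm $\|\nabla f\|=(\sum_k|\partial_k f|^2)^{1/2}$, use $|\partial_j f|\le\|\nabla f\|$, and invoke the direction--averaging identity
$$\int_{\|v\|=1}|\langle\nabla f(z),\bar v\rangle|^2\,d\Omega(v)=c_m\,\|\nabla f(z)\|^2,$$
which recovers the gradient norm from the directional derivatives $\langle\nabla f,\bar v\rangle$ that the slicing supplies. Combining this with Jensen's inequality (concavity of $\log$) to pass from the $\log^+$ in $m(r,\cdot)$ to an integrated power reduces the bound to an $L^s$--mean ($0<s<1$) of the one--variable logarithmic derivatives $f_w'/f_w$, which the classical lemma controls pointwise in $w$; integrating the resulting $O(\log^+T_1(r,f_w)+\log r)$ over $w$ and using the Crofton relation $\int_{\P^{m-1}(\C)}T_1(r,f_w)\,d\Omega(w)=T(r,f)+O(1)$ together with concavity of $\log^+$ yields $m(r,\partial_j f/f)=O(\log^+T(r,f)+\log r)$.

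The ``$||$'' and the exceptional set arise exactly as in one variable, through the Borel (calculus) growth lemma underlying the one--variable estimate; the subtlety is that the one--variable exceptional set $E_w\subset[1,\infty)$ depends on the direction $w$, so I would invoke a Fubini argument to show that the set of $r$ for which the averaged inequality fails still has finite measure. I expect this control of the dependence on $w$ --- measurability and finiteness of the assembled exceptional set uniformly in the direction --- to be the genuine technical obstacle; everything else is the subadditivity bookkeeping above and the classical one--variable lemma. As an alternative that sidesteps the direction--averaging identity, one can argue directly in $\C^m$ via the Green--Jensen formula for $\log|\mathcal{D}^\alpha f|$ combined with the same Borel growth lemma and concavity of $\log$, at the cost of a more hands--on estimate of $\int_{S(r)}|\mathcal{D}^\alpha f/f|^s\sigma_m$.
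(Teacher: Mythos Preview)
The paper does not prove this statement at all: Theorem~\ref{2.2} is quoted from the literature (Noguchi--Ochiai and Fujimoto~1999, as indicated in the citation) and is used as a black box in the proof of the Main Theorem. There is therefore no ``paper's own proof'' to compare your proposal against.

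For what it is worth, your strategy is the standard one and is essentially correct. The induction from $|\alpha|=1$ to general $\alpha$ via $T(r,\mathcal D^\beta f)=O(T(r,f))$ is clean and exactly how one proceeds. For the base case, the slicing over lines through the origin together with Crofton's identity $\int T_1(r,f_w)\,d\Omega(w)=T(r,f)+O(1)$ is the right framework. You correctly flag the two genuine subtleties: recovering the individual partial $\partial_j f$ from the radial derivative $\sum_k w_k\partial_k f$, and assembling a single exceptional set in $r$ from the direction--dependent sets $E_w$. Both are handled in the cited references; the usual device for the second is to work with explicit one--variable inequalities (with constants depending measurably on the slice) rather than invoking the one--variable lemma as a black box, so that one may integrate first and apply the Borel growth lemma once at the end. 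Your alternative route via the Green--Jensen formula and an $L^s$ estimate of $\mathcal D^\alpha f/f$ on $S(r)$ is in fact closer to how Vitter and Biancofiore--Stoll originally did it, and avoids the direction--averaging trick entirely.
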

\begin{theorem}[{\cite[Theorem 5.2.29]{NO}}]\label{2.3}
Let $f$ be a nonzero meromorphic function on $\C^m$ with a reduced representation $f=(f_0:\cdots :f_n)$. Suppose that $f_k\not\equiv 0$. Then 
$$ T(r,\frac{f_j}{f_k})\le T(r,f)\le \sum_{j=0}^nT(r,\frac{f_j}{f_k})+O(1).$$
\end{theorem}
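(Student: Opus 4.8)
The plan is to reduce both inequalities to the elementary pointwise comparison between the norm $\|f\|$ of the mapping and a single coordinate $|f_k|$, and then to convert the resulting $\int_{S(r)}\log|f_k|\,\sigma_m$ terms into counting functions by Jensen's formula together with the first main theorem for $\P^1(\C)$ recorded in section (b). (Here the sum $\sum_j T(r,f_j/f_j)$ on the right is read as $\sum_j T(r,f_j/f_k)$, since $f_j/f_j\equiv 1$ would make it vacuous.) Throughout I write $m(r,\varphi)=\int_{S(r)}\log^+|\varphi|\,\sigma_m$ for a meromorphic function $\varphi$, and I use the two standard facts $\int_{S(r)}\log\sqrt{1+|\varphi|^2}\,\sigma_m=m(r,\varphi)+O(1)$ (from $\log^+|\varphi|\le\log\sqrt{1+|\varphi|^2}\le\log^+|\varphi|+\tfrac12\log 2$) and, via the first main theorem applied to the target $\infty$, $T(r,\varphi)=m(r,\varphi)+N(r,\nu^\infty_\varphi)+O(1)$.

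For the lower bound I would argue pointwise on each sphere $S(r)$. Since $\sqrt{1+|f_j/f_k|^2}=\sqrt{|f_j|^2+|f_k|^2}/|f_k|\le \|f\|/|f_k|$, integration gives $\int_{S(r)}\log\sqrt{1+|f_j/f_k|^2}\,\sigma_m\le \int_{S(r)}\log\|f\|\,\sigma_m-\int_{S(r)}\log|f_k|\,\sigma_m$. Because $f_k$ is holomorphic, Jensen's formula yields $\int_{S(r)}\log|f_k|\,\sigma_m-\int_{S(1)}\log|f_k|\,\sigma_m=N(r,\nu^0_{f_k})\ge 0$, so the subtracted integral is at least the constant $\int_{S(1)}\log|f_k|\,\sigma_m$. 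Subtracting the value at $r=1$ collapses every $S(1)$-contribution into an $O(1)$ term and produces $T(r,f_j/f_k)\le T(r,f)+O(1)$.

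For the upper bound I would start from the identity $\log\|f\|=\log|f_k|+\tfrac12\log\sum_{j}|f_j/f_k|^2$, valid off the indeterminacy locus, and estimate $\tfrac12\log\sum_j|f_j/f_k|^2\le \sum_j\log^+|f_j/f_k|+\tfrac12\log(n+1)$. Integrating over $S(r)$ and applying Jensen's formula to $f_k$ gives $T(r,f)\le N(r,\nu^0_{f_k})+\sum_j m(r,f_j/f_k)+O(1)$. The decisive step is to absorb $N(r,\nu^0_{f_k})$ into the pole-counting functions of the ratios: since the representation is reduced, $\{f_0=\cdots=f_n=0\}$ has codimension $\ge 2$, so $\min_j\nu^0_{f_j}=0$ as a divisor, whence $\nu^0_{f_k}=\max_j\nu^\infty_{f_j/f_k}\le\sum_j\nu^\infty_{f_j/f_k}$ and therefore $N(r,\nu^0_{f_k})\le\sum_j N(r,\nu^\infty_{f_j/f_k})$. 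Feeding this into the previous display and using $T(r,f_j/f_k)=m(r,f_j/f_k)+N(r,\nu^\infty_{f_j/f_k})+O(1)$ gives exactly $T(r,f)\le\sum_{j=0}^n T(r,f_j/f_k)+O(1)$.

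I expect the only genuine subtlety to be the bookkeeping between two normalizations — the characteristic of $f$ as a mapping into $\P^n(\C)$ versus that of each ratio $f_j/f_k$ as a mapping into $\P^1(\C)$ — and in checking that the boundary terms at $S(1)$, the $\tfrac12\log2$ and $\tfrac12\log(n+1)$ losses, and the error in the first main theorem all really amount to $O(1)$. The hypothesis of a reduced representation is what makes the upper bound work: without codimension $\ge 2$ for the common zero set one could not assert $\min_j\nu^0_{f_j}=0$, and $N(r,\nu^0_{f_k})$ would no longer be dominated by the $N(r,\nu^\infty_{f_j/f_k})$, so that is the place where I would be most careful.
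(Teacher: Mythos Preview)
The paper does not prove this statement; it is quoted from Noguchi--Ochiai \cite[Theorem 5.2.29]{NO} and only used later. So there is no ``paper's proof'' to compare against. Your reading of the typo $f_j/f_j$ as $f_j/f_k$ is certainly the intended one.

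Your argument for the \emph{upper} bound is correct and is essentially the standard one: the key point, which you handle properly, is that reducedness forces $\min_j\nu^0_{f_j}=0$ as a divisor, so $\nu^0_{f_k}\le\sum_j\nu^\infty_{f_j/f_k}$ and the stray term $N(r,\nu^0_{f_k})$ is absorbed.

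There is, however, a small gap in your \emph{lower} bound. By your own first paragraph, $\int_{S(r)}\log\sqrt{1+|f_j/f_k|^2}\,\sigma_m=m(r,f_j/f_k)+O(1)$, not $T(r,f_j/f_k)+O(1)$. So after you discard $N(r,\nu^0_{f_k})$ via Jensen and subtract the $S(1)$ contributions, what you have actually shown is $m(r,f_j/f_k)\le T(r,f)+O(1)$, and the pole part $N(r,\nu^\infty_{f_j/f_k})$ is missing on the left. The fix is not to throw $N(r,\nu^0_{f_k})$ away: keep the sharper inequality $m(r,f_j/f_k)\le T(r,f)-N(r,\nu^0_{f_k})+O(1)$, observe that $\nu^\infty_{f_j/f_k}\le\nu^0_{f_k}$ (both $f_j,f_k$ are holomorphic), and add $N(r,\nu^\infty_{f_j/f_k})\le N(r,\nu^0_{f_k})$ to both sides. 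Alternatively, and more cleanly, note $\sqrt{|f_j|^2+|f_k|^2}\le\|f\|$ directly: writing $f_j=d\psi_0,\ f_k=d\psi_1$ with $(\psi_0:\psi_1)$ reduced, one gets $T(r,f_j/f_k)+N_d(r)\le T(r,f)$ with no $O(1)$ at all, matching the paper's stated inequality exactly.
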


\section{Proof of Theorem \ref{1.1}}

In order to prove  Theorem \ref{1.1}, we need the following algebraic propositions.

Let $H_1,\ldots,H_{2n+1}$ be $(2n+1)$ hyperplanes of $\P^n(\C)$ in general position given by
$$ H_i:\ \ x_{i0}\omega_0+x_{i1}\omega_1+\cdots +x_{in}\omega_n=0\ (1\le i\le 2n+1). $$
We consider the rational map $\Phi :\P^n(\C)\times \P^n(\C)\rightarrow \P^{2n}(\C) $ as follows:

For $v=(v_0:v_1\cdots :v_n),\ w=(w_0:w_1:\cdots :w_n)\in \P^n(\C)$, we define the value $\Phi (v,w)= (u_1:\cdots :u_{2n+1})\in\P^{2n}(\C)$ by
$$ u_i=\frac{x_{i0}v_0+x_{i1}v_1+\cdots +x_{in}v_n}{x_{i0}w_0+x_{i1}w_1+\cdots +x_{in}w_n}. $$
\begin{proposition}[{see \cite[Proposition 5.9]{Fu99}}]\label{3.1}
The map $\Phi$ is a birational map of $\P^n(\C)\times \P^n(\C)$ to $\P^{2n}(\C)$.
\end{proposition}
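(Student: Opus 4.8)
The plan is to show $\Phi$ is birational by exhibiting an explicit rational inverse. First I would fix the homogeneous coordinates and write the defining linear forms $L_i(\omega) = x_{i0}\omega_0 + \cdots + x_{in}\omega_n$ for $1 \le i \le 2n+1$, so that $\Phi(v,w) = (L_1(v)/L_1(w) : \cdots : L_{2n+1}(v)/L_{2n+1}(w))$, or, clearing denominators, $\Phi(v,w) = \big(L_1(v)\prod_{j\ne 1}L_j(w) : \cdots : L_{2n+1}(v)\prod_{j\ne 2n+1}L_j(w)\big)$. The key observation is that because $H_1,\dots,H_{2n+1}$ are in general position, any $n+1$ of the forms $L_i$ are linearly independent, hence form a basis of the space of linear forms on $\C^{n+1}$; in particular each coordinate function $\omega_k$ can be written as a fixed $\C$-linear combination of $L_1,\dots,L_{n+1}$, and likewise of $L_{n+1},\dots,L_{2n+1}$. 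This is the structural input that lets one recover $v$ and $w$ separately from the ratios $u_i = L_i(v)/L_i(w)$.

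Next I would construct the inverse map $\Psi : \P^{2n}(\C) \dashrightarrow \P^n(\C)\times\P^n(\C)$. Given $(u_1 : \cdots : u_{2n+1})$, one seeks $(v,w)$ with $L_i(v) = u_i L_i(w)$ (up to a common scalar). Write $v = \sum_{k} c_k L_k(\cdot)^\vee$ — more concretely, use the first $n+1$ relations $L_i(v) = u_i L_i(w)$ ($1 \le i \le n+1$) to express the vector $v$ as a linear combination of the "dual basis" vectors to $L_1,\dots,L_{n+1}$ with coefficients $u_i \cdot (\text{linear in the } L_j(w))$; substituting into the remaining $n$ relations $L_i(v) = u_i L_i(w)$ ($n+2 \le i \le 2n+1$) yields a homogeneous linear system in $w_0,\dots,w_n$ whose coefficients are linear (hence rational) in the $u_i$. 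One checks this system has a one-dimensional solution space for generic $u$ — the determinant of the relevant $(n+1)\times(n+1)$ submatrix is a nonzero polynomial in the $u_i$ (nonzero because, e.g., specializing to the image of a generic point shows it does not vanish identically), so by Cramer's rule $w$, and then $v$, are given by explicit rational functions of $(u_1:\cdots:u_{2n+1})$. This defines $\Psi$, and by construction $\Psi \circ \Phi = \mathrm{id}$ and $\Phi \circ \Psi = \mathrm{id}$ on the appropriate Zariski-open dense subsets.

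Finally I would record that $\Phi$ is defined on the open set where $\prod_i L_i(v) \ne 0$ and $\prod_i L_i(w) \ne 0$ (nonempty since the $H_i$ are proper), that $\Psi$ is defined off the vanishing of the Cramer determinant together with the loci forcing $v$ or $w$ to be zero, and that the two composites are the identity where everything is defined; this is precisely the assertion that $\Phi$ is birational.

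The main obstacle is the bookkeeping in the middle step: setting up the linear system for $w$ cleanly and verifying that its coefficient matrix has generically full rank $n+1$ (equivalently, that the Cramer determinant is not the zero polynomial). The general-position hypothesis on $H_1,\dots,H_{2n+1}$ is exactly what is needed here — it guarantees both that the dual-basis expansions exist and that no degeneracy forces the determinant to vanish identically — but making this rank statement precise, rather than hand-waving "generic," is where the real content lies; alternatively one can sidestep part of it by noting that $\Phi$ is dominant (its image contains the point coming from a generic $(v,w)$) and injective on a dense open set, so it is automatically birational onto its image, which is $2n$-dimensional and irreducible, hence all of $\P^{2n}(\C)$.
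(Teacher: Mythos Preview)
The paper does not actually prove this proposition; it is quoted from Fujimoto \cite[Proposition~5.9]{Fu99} without argument, so there is no ``paper's own proof'' to compare against.

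Your approach is the natural one and is essentially correct. One small imprecision: after eliminating $v$ via the first $n+1$ relations, you obtain $n$ homogeneous linear equations in the $n+1$ quantities $L_1(w),\dots,L_{n+1}(w)$ (equivalently in $w_0,\dots,w_n$), so the coefficient matrix is $n\times(n+1)$, not $(n+1)\times(n+1)$; what must be checked is that some $n\times n$ minor is not identically zero in the $u_i$. Concretely, writing $L_j=\sum_{k=1}^{n+1}\alpha_{jk}L_k$ for $n+2\le j\le 2n+1$, the equations read $\sum_{k}\alpha_{jk}(u_k-u_j)L_k(w)=0$. General position gives that every $\alpha_{jk}\ne 0$ (else $n+1$ of the $L_i$ would be dependent) and that the rows of $(\alpha_{jk})$ are independent (since $L_{n+2},\dots,L_{2n+1}$ are); specializing to $u_1=\cdots=u_{n+1}=1$ with the remaining $u_j$ distinct from $1$ then exhibits a point where the matrix has rank $n$, so the minor is not the zero polynomial. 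Your fallback (dominant and generically injective implies birational) is also fine, but verifying generic injectivity unwinds to the same linear-algebra check.
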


Now let $b_1,\ldots,b_{2n+1}$ be $(2n+1)$ moving hyperplanes of $\P^n(\C)$ in general position with reduced representations
$$ b_i=(b_{i0}:b_{i1}:\cdots: b_{in})\ (1\le i\le 2n+1). $$
Let $f$ and $g$ be two meromorphic mappings of $\C^m$ into $\P^n(\C)$ with reduced representations
$$ f=(f_0:\cdots :f_n)\ \text{ and }\ g=(g_0:\cdots :g_n). $$
Define $h_i={(f,b_i)}/{(g,b_i)}\ (1\le i\le 2n+1)$ and $h_I=\prod_{i\in I}h_i$ for each subset $I$ of $\{1,\ldots,2n+1\}.$
Set $\mathcal I=\{I=(i_1,\ldots,i_n)\ ;\ 1\le i_1<\cdots <i_n\le 2n+1\}$. Let $\mathcal R\{b_i\}_{i=1}^{2n+1}$ be the smallest subfield of $\mathcal M$ which contains $\C$ and $\{b_{il}/b_{is};b_{is}\not\equiv 0, 1\le i\le 2n+1,0\le l,s\le n\}$. We have the following proposition.
\begin{proposition}\label{3.2}
If there exist functions $A_I\in\mathcal R\{b_i\}_{i=1}^{2n+1}\ (I\in\mathcal I)$, not all zero, such that
$$ \sum_{I\in\mathcal I}A_Ih_I\equiv 0, $$
then the map $f\times g$ into $\P^n(\C)\times\P^n(\C)$ is algebraically degenerate over $\mathcal R\{b_i\}_{i=1}^{2n+1}$.
\end{proposition}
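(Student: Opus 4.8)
\emph{Proof proposal.} The plan is to clear denominators in the relation $\sum_{I\in\mathcal I}A_Ih_I\equiv 0$ so as to obtain a bihomogeneous polynomial identity satisfied by $f\times g$; the nontriviality of that polynomial will be verified by specializing to a generic point of $\C^m$ and invoking the birationality of $\Phi$ (Proposition \ref{3.1}).

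For each $i$ fix an index $l_i$ with $b_{il_i}\not\equiv 0$ and set $\ell_i(\omega)=\sum_{k=0}^n\dfrac{b_{ik}}{b_{il_i}}\omega_k$, a linear form whose coefficients lie in $\mathcal R\{b_i\}_{i=1}^{2n+1}$ and for which $h_i=\ell_i(f)/\ell_i(g)$. Multiplying $\sum_{I\in\mathcal I}A_Ih_I\equiv 0$ by $\prod_{i=1}^{2n+1}\ell_i(g)\not\equiv 0$ and using $\sharp I=n$ (hence $\sharp(\{1,\dots,2n+1\}\setminus I)=n+1$ and $h_I\prod_{i=1}^{2n+1}\ell_i(g)=\prod_{i\in I}\ell_i(f)\prod_{j\notin I}\ell_j(g)$), one obtains $Q(z)(f(z),g(z))\equiv 0$, where
$$Q(z)(\omega,\omega')=\sum_{I\in\mathcal I}A_I(z)\prod_{i\in I}\ell_i(\omega)\prod_{j\notin I}\ell_j(\omega')$$
is a polynomial, bihomogeneous of bidegree $(n,n+1)$ in $(\omega_0,\dots,\omega_n,\omega_0',\dots,\omega_n')$, with coefficients in $\mathcal R\{b_i\}_{i=1}^{2n+1}\subseteq\mathcal R$. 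Being separately homogeneous in $\omega$ and in $\omega'$, $Q$ defines a form on $\P^n(\C)\times\P^n(\C)$.

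It remains to show $Q\not\equiv 0$, and this is the only delicate point. Choose $I_0\in\mathcal I$ with $A_{I_0}\not\equiv 0$ and pick a generic $z_0\in\C^m$: outside the poles of the $A_I$ and of the $b_{ik}/b_{il_i}$, outside $\{A_{I_0}=0\}$, and outside the zero loci of the finitely many determinants $\det(b_{i_kl})$, so that $b_1(z_0),\dots,b_{2n+1}(z_0)$ are $2n+1$ hyperplanes of $\P^n(\C)$ in general position. Evaluating all coefficients at $z_0$, each $\ell_i$ becomes a fixed linear form, and the identity
$$Q(z_0)(\omega,\omega')=\Bigl(\prod_{i=1}^{2n+1}\ell_i(\omega')\Bigr)\cdot P_{z_0}\bigl(\Phi_{b(z_0)}(\omega,\omega')\bigr),\qquad P_{z_0}(u)=\sum_{I\in\mathcal I}A_I(z_0)\prod_{i\in I}u_i,$$
holds, where $u_i$ is the $i$-th coordinate of the map $\Phi_{b(z_0)}$ of Proposition \ref{3.1} attached to $b_1(z_0),\dots,b_{2n+1}(z_0)$ (indeed $u_i=\ell_i(\omega)/\ell_i(\omega')$). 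Since $A_{I_0}(z_0)\ne 0$, $P_{z_0}$ is a nonzero homogeneous polynomial; since $\Phi_{b(z_0)}$ is birational, its image is Zariski dense in $\P^{2n}(\C)$, so $P_{z_0}$ cannot vanish identically on it, whence $P_{z_0}\bigl(\Phi_{b(z_0)}(\omega,\omega')\bigr)\not\equiv 0$. As $\prod_i\ell_i(\omega')\not\equiv 0$, we conclude $Q(z_0)\not\equiv 0$, so the coefficients of $Q$ do not all vanish identically; that is, $Q\not\equiv 0$.

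Consequently $Q$ is a nontrivial polynomial, bihomogeneous of bidegree $(n,n+1)$, with coefficients in $\mathcal R$, satisfying $Q(z)(f(z),g(z))\equiv 0$; by the definition of algebraic degeneracy of $f\times g$ recalled just before the Main Theorem, this is exactly the assertion that $f\times g$ into $\P^n(\C)\times\P^n(\C)$ is algebraically degenerate over $\mathcal R$. The expected main obstacle is the transfer of Proposition \ref{3.1}, which is stated for fixed hyperplanes, to the moving setting via this specialization; the denominator-clearing and the bookkeeping of coefficient fields are then routine.
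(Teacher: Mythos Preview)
Your proposal is correct and follows essentially the same route as the paper: clear denominators in $\sum_I A_I h_I\equiv 0$ to produce a bihomogeneous polynomial $Q$ of bidegree $(n,n+1)$ with coefficients in $\mathcal R$, then verify $Q\not\equiv 0$ by specializing at a generic $z_0\in\C^m$ where the $b_i(z_0)$ are in general position and invoking the birationality of $\Phi$ from Proposition~\ref{3.1}. The only cosmetic difference is that the paper arranges $b_{i0}\not\equiv 0$ for all $i$ by a change of homogeneous coordinates and then divides by $\prod_i b_{i0}$, whereas you pick an index $l_i$ for each $i$ separately; both normalizations serve the same purpose.
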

\begin{proof}
By changing the homogeneous coordinates of $\P^n(\C)$, we may assume that $b_{i0}\not\equiv 0\ (1\le i\le 2n+1)$. Since $b_1,\ldots,b_{2n+1}$ are in general position, we have $\det (b_{i_jk})_{0\le j,k\le n}\not\equiv 0$ for $1\le i_0<\cdots i_{n}\le 2n+1$. Therefore, the set
$$ S=\bigl (\bigcap_{I\in\mathcal I}\{z\in\C^m\ ;\ A_I(z)=0\}\bigl )\cup\bigcup_{1\le i_0<\cdots i_{n}\le 2n+1}\{z\in\C^m\ ;\ \det (b_{i_jk}(z))_{0\le j,k\le n}= 0\} $$
is a proper analytic subset of $\C^m$. Take $z_0\not\in S$ and set $x_{ij}=b_{ij}(z_0)$.

For $v=(v_0:v_1\cdots :v_n),\ w=(w_0:w_1:\cdots :w_n)\in \P^n(\C)$, we define the map $\Phi (v,w)= (u_1:\cdots :u_{2n+1})\in\P^{2n}(\C)$ as above.  By Proposition \ref{3.1}, $\Phi$ is a birational function. This implies that the function 
$$\sum_{I\in\mathcal I}A_I(z_0)\prod_{i\in I}\frac{\sum_{j=0}^{n}b_{ij}(z_0)v_j}{\sum_{j=0}^{n}b_{ij}(z_0)w_j}$$
is a nonzero rational function. It follows that 
$$ Q(z)(v_0,\ldots,v_n,w_0,\ldots,w_n)=\dfrac{1}{\prod_{i=1}^{2n+1}b_{i0}}\sum_{I\in\mathcal I}A_I(z)\left (\prod_{i\in I}\sum_{j=0}^{n}b_{ij}(z)v_j\right )\times \left (\prod_{i\in I^c}\sum_{j=0}^{n}b_{ij}(z)w_j\right )$$
is a nonzero polynomial with coefficients in $\mathcal R\{b_i\}_{i=1}^{2n+1}$, where $I^c=\{1,\ldots.,2n+1\}\setminus I$. By the assumption of the proposition, it is clear that
$$ Q(z)(f_0(z),\ldots,f_n(z),g_0(z),\ldots,g_n(z))\equiv 0. $$
Hence $f\times g$ is algebraically degenerate over $\mathcal R\{b_i\}_{i=1}^{2n+1}$.
\end{proof}

\begin{proposition}\label{new3.1}
Let $f$ be a meromorphic mapping of $\C^m$ into $\P^n(\C)$ and let $b_1,\ldots,b_{n+1}$ be moving hyperplanes of $\P^n(\C)$ in general position with reduced representations
$$ f=(f_0:\cdots :f_n),\ b_i=(b_{i0}:\cdots :b_{in})\ (1\le i\le n+1). $$
Then for each regular point $z_0$ of the analytic subset $\bigcup_{i=1}^{n+1}\{z\ ;\ (f,b_i)(z)=0\}$ with $z_0\not\in I(f)$, we have
$$\min_{1\le i\le n+1}\nu^0_{(f,b_i)}(z_0)\le\nu^0_{\det \Phi} (z_0),$$
where $I(f)$ denotes the indeterminacy set of $f$ and $\Phi$ is the matrix $(b_{ij}; 1\le i\le n+1, 0\le j\le n).$
\end{proposition}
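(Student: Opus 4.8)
The plan is to turn the defining linear relations $\sum_{j=0}^{n}b_{ij}f_j=(f,b_i)$ $(1\le i\le n+1)$ into a divisibility statement for $\det\Phi$ by means of the adjugate matrix. Writing $F={}^{t}(f_0,\dots,f_n)$, these relations read $\Phi F={}^{t}\big((f,b_1),\dots,(f,b_{n+1})\big)$, and multiplying on the left by $\mathrm{adj}\,\Phi$ (and using $(\mathrm{adj}\,\Phi)\Phi=(\det\Phi)\,\mathrm{Id}$) gives, for each $j\in\{0,\dots,n\}$,
$$(\det\Phi)\,f_j=\sum_{i=1}^{n+1}c_{ij}\,(f,b_i),$$
where $c_{ij}=\pm M_{ij}$, with $M_{ij}$ the $n\times n$ minor of $\Phi$ obtained by deleting its $i$-th row and $j$-th column; in particular each $c_{ij}$ is a polynomial in the $b_{kl}$ and hence holomorphic on $\C^m$. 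Note also that $\det\Phi\not\equiv0$ since $b_1,\dots,b_{n+1}$ are in general position.

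Next I would localize near $z_0$. As $z_0\notin I(f)$, pick $j_0$ with $f_{j_0}(z_0)\ne0$; then $f_{j_0}$ is zero-free on some neighbourhood $U$ of $z_0$, and on $U$
$$\det\Phi=\frac{1}{f_{j_0}}\sum_{i=1}^{n+1}c_{ij_0}\,(f,b_i).$$
Since $z_0$ is a regular point of $X:=\bigcup_{i=1}^{n+1}\{(f,b_i)=0\}$, after shrinking $U$ the set $X\cap U$ is a connected smooth hypersurface, say $X\cap U=\{\psi=0\}$ with $\psi$ holomorphic on $U$ and $d\psi$ nowhere vanishing (so $\nu^0_\psi(z_0)=1$). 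Each $\{(f,b_i)=0\}\cap U$ is an analytic subset of the irreducible set $X\cap U$ which is either empty or of pure dimension $m-1$, hence either empty or all of $X\cap U$; consequently $(f,b_i)=e_i\psi^{k_i}$ on $U$ with $e_i$ holomorphic and zero-free and $k_i=\nu^0_{(f,b_i)}(z_0)\ge0$.

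Let $m_0:=\min_{1\le i\le n+1}k_i=\min_{1\le i\le n+1}\nu^0_{(f,b_i)}(z_0)$. Substituting $(f,b_i)=e_i\psi^{k_i}$ into the last display shows that $\psi^{m_0}$ divides $\det\Phi$ on $U$; since $\nu^0_\psi(z_0)=1$ this yields
$$\nu^0_{\det\Phi}(z_0)\ge m_0=\min_{1\le i\le n+1}\nu^0_{(f,b_i)}(z_0),$$
which is the assertion.

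I do not expect a genuine obstacle here: after passing to the adjugate the computation is routine, and the real content is in how the two hypotheses enter. The condition $z_0\notin I(f)$ is exactly what permits cancelling the unit $f_{j_0}$ and thus replacing ``$\psi^{m_0}$ divides $(\det\Phi)f_{j_0}$'' by ``$\psi^{m_0}$ divides $\det\Phi$''; the condition that $z_0$ be a regular point of $X$ is what makes the local factorizations $(f,b_i)=e_i\psi^{k_i}$ available with one and the same $\psi$, so that the orders $\nu^0_{(f,b_i)}(z_0)$ are all read off a single smooth branch and the comparison with $\nu^0_{\det\Phi}(z_0)$ is meaningful. At a singular point of $X$ one would have to track several branches at once, and the inequality as stated need not even make sense.
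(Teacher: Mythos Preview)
Your argument is correct and is essentially the same as the paper's: the paper applies Cramer's rule to the system $\Phi F={}^t((f,b_1),\dots,(f,b_{n+1}))$ to get $f_{j_0}\det\Phi=\det\Phi'$, where $\Phi'$ is $\Phi$ with its $j_0$-th column replaced by ${}^t((f,b_1),\dots,(f,b_{n+1}))$, which is exactly your adjugate identity $(\det\Phi)f_{j_0}=\sum_i c_{ij_0}(f,b_i)$ with $c_{ij_0}$ the signed minors; it then reads off $\nu^0_{\det\Phi}(z_0)=\nu^0_{\det\Phi'}(z_0)\ge\min_i\nu^0_{(f,b_i)}(z_0)$ directly, whereas you spell out the local factorization $(f,b_i)=e_i\psi^{k_i}$ that justifies this last step at a regular point.
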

\begin{proof}
Since $z_0\not\in I(f)$, we may assume that $f_0(z_0)\ne 0.$ We consider the system of equations
$$b_{i0}f_0+\cdots +b_{in}f_n=(f,b_i)\ (1\le i\le n+1).$$
Solving these equations, we obtain
$$ f_0=\frac{\det\Phi'}{\det\Phi}, $$
where $\Phi'$ is the matrix obtained from $\Phi$ by replacing the first column of $\Phi$ by $((f,b_i))_{1\le i\le 2n+1}$.
Therefore, we have
$$ \nu^0_{\det \Phi}(z_0)=\nu^0_{\det\Phi' }(z_0)\ge\min_{1\le i\le n+1}\nu^0_{(f,b_i)}(z_0). $$
The proposition is proved.
\end{proof}

\vskip0.5cm
{\sc Proof of Theorem \ref{1.1}.}
\ Assume that $f,g,a_i$ have reduced representations
$$ f=(f_0:\cdots :f_n),\ g=(g_0:\cdots :g_n),\ a_i=(a_{i0}:\cdots :a_{in}).$$
Without loss of generality, we may assume that $f$ and $g$ are linearly non-degenerate over $\mathcal R\{a_i\}_{i=1}^{2n+2}$, otherwise the map $f\times g$ will be algebraically degenerate over $\mathcal R\{a_i\}_{i=1}^{2n+2}$. Then by the second main theorem for moving target, we have
\begin{align*}
\bigl |\bigl |\ \dfrac{2n+2}{n+2}T(r,f)&\le\sum_{i=1}^{2n+2}N_{(f,a_i)}^{[n]}(r)+o(T(r,f))\\
&\le n\cdot\sum_{i=1}^{2n+2}N_{(g,a_i)}^{[1]}(r)+o(T(r,f))\\
&\le n(2n+2)(T(r,g))+o(T(r,f)).
\end{align*}
Then we have $||\ T(r,f)=O(T(r,g))$. Similarly, we also have $||\ T(r,g)=O(T(r,f))$.  

By changing the homogeneous coordinates of $\P^n(\C)$ if necessary, we may assume that $a_{i0}\not\equiv 0$ for all $1\le i\le 2n+2$. We set $\tilde a_{ij}=a_{ij}/a_{i0}$,
$$ \tilde a_i=(\dfrac{a_{i0}}{a_{i0}},\dfrac{a_{i1}}{a_{i0}},\ldots ,\dfrac{a_{in}}{a_{i0}}),\ (f,\tilde a_i)=\sum_{j=0}^n\tilde a_{ij}f_j\ \text{ and }\  (g,\tilde a_i)=\sum_{j=0}^n\tilde a_{ij}g_j.$$

We suppose contrarily that the map $f\times g$ is algebraically non-degenerate over $\mathcal R\{a_i\}_{i=1}^{2n+2}.$
Define $h_i={(f,\tilde a_i)}/{(g,\tilde a_i)}\ (1\le i\le 2n+2).$ Then
${h_i}/{h_j} ={\bigl ((f,\tilde a_i)\cdot (g,\tilde a_j)\bigl )}/{\bigl ((g,\tilde a_i)\cdot (f,\tilde a_j)\bigl )}$ does not depend on  the choice of representations of $f$ and $g$ . 
Since $\sum_{k=0}^n \tilde a_{ik}f_k-h_i\cdot \sum_{k=0}^n \tilde a_{ik}g_k=0\ (1\le i \le 2n+2),$ it implies that 
\begin{align}\label{3.3}
\Phi :=\det \ (\tilde a_{i0},\ldots,\tilde a_{in},\tilde a_{i0}h_i,\ldots,\tilde a_{in}h_i; 1\le i \le 2n+2)\equiv 0.
\end{align}

For each subset $I\subset \{1,2,\ldots,2n+2\},$ put $h_I=\prod_{i\in I}h_i, \tilde h_I=\prod_{i\in I}\frac{h_i}{h_1}$. Denote by $\mathcal {I}$ the set 
$$ \mathcal I=\{(i_1,\ldots,i_{n+1})\ ;\ 1\le i_1<\cdots <i_{n+1}\le 2n+2\}. $$
For each $I=(i_1,\ldots,i_{n+1})\in \mathcal {I}$, define 
\begin{align*}
A_I=(-1)^{{(n+1)(n+2)}/{2}+i_1+\cdots+i_{n+1}}&\times \det (\tilde a_{i_rl};1\le r \le n+1,0\le l \le n)\\
&\times\det (\tilde a_{j_sl};1\le s \le n+1,0\le l \le n),
\end{align*}
where $J=(j_1,\ldots,j_{n+1})\in \mathcal {I}$ such that $I \cup J =\{1,2,\ldots,2n+2\}.$ We have
$$ \sum_{I\in\mathcal I}A_Ih_I=0. $$

We take a partition of $\mathcal I=\mathcal I_1\cup\cdots\cup\mathcal I_k$ with the following properties:
\begin{itemize}
\item $\mathcal I_t\cap\mathcal I_s=\emptyset,$, $1\le t<s\le k$.
\item $\sum_{I\in\mathcal I_t}A_Ih_I=0$, $1\le t\le k$.
\item $\sum_{I\in \mathcal J}A_Ih_I\not\equiv 0$ for any proper subset $J$ of $\mathcal I_t$, $1\le t\le k$.
\end{itemize}
For each $1\le t\le k$, we set $n_t=\sharp \mathcal I_t-2$ and assume that $\mathcal I_t=\{I_{0t},\ldots, I_{(n_t+1)t}\}$. We denote by $F_t$ the meromorphic mapping from $\C^m$ into $\P^{n_t}(\C)$ with the presentation $(A_{I_{0t}}h_{I_{0t}}:\cdots :A_{I_{n_tt}}h_{n_tt})$.

For each $1\le i\le 2n+2$, we define $S(i)$ the set of all indices $j\ne i$ such that there exist $\mathcal I_t$, two elements $I,I'\in\mathcal I_t$ satisfying
$$ \dfrac{h_I}{h_{I'}}=\dfrac{h_i}{h_j}. $$

\begin{claim}\label{3.3} For each $1\le i\le 2n+2$, $\sharp S(i)\ge n+1$,
\begin{align*}
 T(r,\dfrac{h_j}{h_i})&\le \frac{q(q-2)(n+1)}{l_0+1}T(r,g)+o(T(r,g)),\ \forall j\in S(i),\\
\text{and }T(r,\dfrac{h_j}{h_i})&\le 2\frac{q(q-2)(n+1)}{l_0+1}T(r,g)+o(T(r,g)),\ \forall j\not\in S(i),
\end{align*}
where $q=\binom{2n+2}{n+1}$.
\end{claim} 

Without loss of generality, we prove the claim for $i=n+2$. Indeed, suppose contrarily that $\sharp S(n+2)\le n$, we may assume that $1,\ldots, n+1\not\in S(n+2)$. Put $I_{01}=(1,\ldots ,n+1)$ and suppose that $T_{01}\in\mathcal I_1$. Since $f\times g$ is algebraically non-degenerate over $\mathcal R{\{a_i\}_{i=1}^{2n+2}}$,
$$ \sum_{s=0}^{n_1+1}A_{I_{s1}}(z)\dfrac{W^{I_{s1}}(z)}{V^{I_{s1}}(z)}\equiv 0,\ \forall z\in\C^m, $$
where $W^I(z)=\prod_{i\in I}\sum_{j=0}^na_{ij}(z)w_j$ and $V^I(z)=\prod_{i\in I}\sum_{j=0}^na_{ij}(z)v_j$. We take a point $z_0$ which is not zero neither pole of any $A_I$, not pole of any $a_{ij}$, not in the indeterminacy loci of all $a_i$ and such that the family of hyperplanes $\{a_i(z_0)\}_{i=1}^{2n+2}$ is in general position in $\P^n(\C)$. For the point $\omega =(\omega_0:\cdots :\omega_n)\in\bigcap_{i=n+3}^{2n+2}a_i(z_0) $, we have
$$ \sum_{\underset{s=0}{I_{s1}\subset\{1,\ldots ,n+2\}}}^{n_1+1}A_{I_{s1}}(z_0)\dfrac{W^{I_{s1}}(z_0)}{V^{I_{s1}}(z_0)}\equiv 0,\ \forall v.$$
This yields that there exists $1\le s'\le n_1+1$ such that $I_{s'1}\subset\{1,\ldots ,n+2\}$, $I_{s'1}\ne I_{01}$. Therefore $n+2\in I_{s'1}$ and 
$$ \dfrac{h_{I_{01}}}{h_{I_{s'1}}}=\dfrac{h_{n+2}}{h_j}$$
for some $j\in\{1,\ldots ,n+1\}$. This contradicts the supposition that $1,\ldots, n+1\not\in S(n+2)$. Hence, we must have $\sharp S(n+2)\ge n+1$. 

Now for $j\in S(n+2)$, we may assume that there exist $\mathcal I_t$  two elements $I,I'\in\mathcal I_1$ satisfying
$$ \dfrac{h_I}{h_{I'}}=\dfrac{h_j}{h_{n+2}}. $$ 
We suppose that $F_1$ has a reduced representation $F_1=(uA_{I_{01}}h_{I_{01}}:\cdots :uA_{I_{n_11}}h_{I_{n_11}})$, where $u$ is a meromorphic function.
Thus, by the second main theorem we easily have

\begin{align*}
T(r,\frac{h_j}{h_{n+2}})&\le T(r,F_1)\le \sum_{i=0}^{n_1+1}N^{[n_1]}_{uA_{I_{i1}}h_{I_{i1}}}(r)+o(T(r,g))\\
&\le\sum_{i=0}^{n_1+1}(N^{[n_1]}_{A_{I_{i1}}h_{I_{i1}}}(r)+N^{[t]}_{{1}/{A_{I_{i1}}h_{I_{i1}}}}(r))+o(T(r,g))\\
&\le\sum_{i=0}^{n_1+1}(N^{[n_1]}_{h_{I_{i1}}}(r)+N^{[n_1]}_{{1}/{I_{i1}}}(r))+o(T(r,g))\\
&\le n_1\sum_{i=0}^{n_1+1}(N^{[1]}_{h_{I_{i1}}}(r)+N^{[1]}_{{1}/{I_{i1}}}(r))+o(T(r,g))\\
&\le n_1\sum_{I\in\mathcal I}(N^{[1]}_{h_I}(r)+N^{[1]}_{1/h_I}(r))+o(T(r,g))\\
&\le n_1\sum_{I\in\mathcal I}\sum_{i\in I}(N^{[1]}_{h_i}(r)+N^{[1]}_{1/h_i}(r))+o(T(r,g))\\
&= \frac{n_1q}{2}\sum_{i=1}^{2n+2}(N^{[1]}_{h_{i}}(r)+N^{[1]}_{{1}/{h_{i}}}(r))+o(T(r,g))\\
&\le \frac{n_1q}{2}\sum_{i=1}^{2n+2}N^{[1]}_{(g,a_i),>l_0}(r)+o(T(r,g))\\
&\le\frac{n_1q}{2(l_0+1)}\sum_{i=1}^{2n+2}N_{(g,a_i),>l_0}(r)+o(T(r,g))\\
&\le\frac{n_1q(2n+2)}{2(l_0+1)}T(r,g)+o(T(r,g))\\
&\le \frac{q(q-2)(n+1)}{l_0+1}T(r,g)+o(T(r,g)).
\end{align*}
For $j\not\in S(n+2)$, since $\sharp S(j)\ge n+2$, there exists $l\in S(j)\cap S(n+2)$. Hence, by the above proof, we have
$$ T(r,\dfrac{h_j}{h_{n+2}})\le T(r,\dfrac{h_j}{h_l})+T(r,\dfrac{h_l}{h_{n+2}})\le 2\frac{q(q-2)(n+1)}{l_0+1}T(r,g)+o(T(r,g)).$$
The claim is proved.

\vskip0.2cm
We now continue the proof of Theorem \ref{1.1}. We see that there exist functions $b_{ij}\in\mathcal R\{a_i\}_{i=1}^{2n+2}\ (n+2\le i\le 2n+2,1\le j\le n+1)$ such that
$$ \tilde a_i=\sum_{j=1}^{n+1}b_{ij}\tilde a_j. $$
By the identity (\ref{3.3}), we have
$$ \det \ (\tilde a_{i0},\ldots,\tilde a_{in},\tilde a_{i0}h_i,\ldots,\tilde a_{in}h_i; 1\le i \le 2n+2)\equiv 0 .$$
It easily implies that
$$ \det\ (\tilde a_{i0}h_i-\sum_{j=1}^{n+1}b_{ij}\tilde a_{j0}h_j,\ldots,\tilde a_{in}h_i-\sum_{j=1}^{n+1}b_{ij}\tilde a_{jn}h_j; n+2\le i\le 2n+2,1\le j\le n+1)\equiv 0. $$
Therefore, the matrix
$$ \Psi =\bigl (\tilde a_{i0}h_i-\sum_{j=1}^{n+1}b_{ij}\tilde a_{j0}h_j,\ldots,\tilde a_{in}h_i-\sum_{j=1}^{n+1}b_{ij}\tilde a_{jn}h_j; n+2\le i\le 2n+2\bigl ) $$
has the rank at most $n$.

Suppose that $\rank\Psi <n$. Then, the determinant of the square submatrix
$$\bigl (\tilde a_{i1}h_i-\sum_{j=1}^{n+1}b_{ij}\tilde a_{j0}h_j,\ldots,\tilde a_{in}h_i-\sum_{j=1}^{n+1}b_{ij}\tilde a_{jn}h_j; n+2\le i\le 2n+1\bigl )$$
 vanishes identically. By Proposition \ref{3.2}, it follows that $f\times g$ is algebraically degenerate over $\mathcal R\{a_i\}_{i=1}^{2n+2}$. This contradicts the supposition. Hence $\rank\Psi =n$.

Without loss of generality, we may assume that the determinant of the square submatrix
$$\bigl (\tilde a_{i1}h_i-\sum_{j=1}^{n+1}b_{ij}\tilde a_{j0}h_j,\ldots,\tilde a_{in}h_i-\sum_{j=1}^{n+1}b_{ij}\tilde a_{jn}h_j; n+2\le i\le 2n+1\bigl )$$
 of $\Psi$ does not vanish identically. On the other hand, we have
$$(\tilde a_{i0}h_i-\sum_{j=1}^{n+1}b_{ij}\tilde a_{j0}h_j)g_0+\cdots +(\tilde a_{in}h_i-\sum_{j=1}^{n+1}b_{ij}\tilde a_{jn}h_j)g_n=0\ (n+2\le i\le 2n+1) .$$
Thus
\begin{align*}
(\tilde a_{i0}\frac{h_i}{h_1}-\sum_{j=1}^{n+1}b_{ij}\tilde a_{j0}\frac{h_j}{h_1})\frac{g_0}{g_n}&+\cdots +(\tilde a_{in}\frac{h_i}{h_1}-\sum_{j=1}^{n+1}b_{ij}\tilde a_{j(n-1)}\frac{h_j}{h_1})\frac{g_{n-1}}{g_n}\\&=-\tilde a_{in}\frac{h_i}{h_1}+\sum_{j=1}^{n+1}b_{ij}\tilde a_{jn}\frac{h_j}{h_1}\ (n+2\le i\le 2n+1).
\end{align*}
We regard the above identities as a system of $n$ equations in unknown variables ${g_0}/{g_n},\ldots ,$ ${g_{n-1}}/{g_n}$ and solve these to obtain that ${g_i}/{g_n} \ (0\le i\le n-1)$ has the form
$$ \frac{g_i}{g_n}= \frac{P_i}{Q_i},$$
where $P_i$ and $Q_i$ are homogeneous polynomials in ${h_j}/{h_1}\ (1\le j\le 2n+1)$ of degree $n$ with coefficients in $\mathcal R\{a_j\}_{j=1}^{2n+2}$. Then by Theorem \ref{2.2} and the above claim we have
\begin{align*}
T&(r,g)\le\sum_{i=0}^{n-1}T(r,\frac{g_i}{g_n})=\sum_{i=0}^{n-1}T(r,\frac{P_i}{Q_i})\le n^2\sum_{j=1}^{2n+1}T(r,\dfrac{h_j}{h_1})+o(T(r,g))\\
&\le n^2\biggl (\sum_{\underset{j\in S(1)}{2\le j\le 2n+1}}\frac{q(q-2)(n+1)}{l_0+1}T(r,g)+ 2\sum_{\underset{j\not\in S(1)}{2\le j\le 2n+1}}\frac{q(q-2)(n+1)}{l_0+1}T(r,g) \biggl )+o(T(r,g)).\\
&\le 3n^3\frac{q(q-2)(n+1)}{l_0+1}T(r,g)+o(T(r,g)),
\end{align*}
where the last inequality comes from the fact that there are at most $n$ indices $j\not\in S(1)$.
Letting $r\rightarrow +\infty$, we get 
$$1\le 3n^3\frac{q(q-2)(n+1)}{l_0+1},\ \text{ i.e., }l_0+1\le 3n^3(n+1)q(q-2).$$
This is a contradiction. Hence, $f\times g$ is algebraically degenerate over $\mathcal R\{a_i\}_{i=1}^{2n+2}.$
The theorem is proved.\hfill$\square$


\begin{thebibliography}{99}

\bibitem{DT} G. Dethloff and T. V. Tan, \textit{Uniqueness problem for meromorphic mappings with truncated multiplicities and moving targets}, Nagoya J. Math. \textbf{181} (2006), 75--101.

\bibitem{Fu75} H. Fujimoto, \textit{The uniqueness problem of meromorphic maps into the complex projective space}, Nagoya Math. J. \textbf{58} (1975), 1--23.

\bibitem{Fu99} H. Fujimoto, \textit{Uniqueness problem with truncated multiplicities in value distribution theory, II}, Nagoya Math. J. \textbf{155} (1999), 161--188.

\bibitem{GQQ} H. H. Giang and L. N. Quynh and S. D. Quang,\textit{ Uniqueness theorems for meromorphic mappings sharing few hyperplanes}, J. Math. Anal. Appl. \textbf{393} (2012), 445--456.

\bibitem{NO} J. Noguchi and T. Ochiai, \textit{Geometric function theory in several complex variables}, Transl. Math. Monogr. 80, American Mathematical Society, Providence, RI,
1990.

\bibitem{HTQ} H. H. Pham and S. D. Quang and D. T. Do, \textit{Unicity theorems with truncated multiplicities of meromorphic mappings in several complex variables sharing small identical sets for moving target}, Internat. J. Math. \textbf{21} (2010), 1095--1120.

\bibitem{QQ} S. D. Quang and L. N. Quynh, \textit{Two meromorphic mappings sharing $2n+2$ hyperplanes regardless of multiplicity}, J. Math. Anal. Appl. \textbf{410} (2014), 771--782.

\bibitem{Ru01} M. Ru, \textit{A uniqueness theorem with moving targets without counting multiplicity}, Proc. Amer. Math. Soc. \textbf{129} (2001), 2701--2707.

\bibitem {RuW} M. Ru and J. Wang, \textit{Truncated second main theorem with moving targets}, Trans. Amer. Math. Soc. \textbf{356} (2004), 557--571.

\bibitem{TQ05} D. D. Thai and S. D. Quang, \textit{Uniqueness problem with truncated multiplicities of meromorphic mappings in several complex variables for moving target}, Internat. J. Math. \textbf{16} (2005), 903--939.

\bibitem{TQ08} D. D. Thai and S. D. Quang, \textit{Second main theorem with truncated counting function in several complex variables for moving targets}, Forum Math. \textbf{20} (2008), 163--179.

\bibitem{Tu02} Z. H. Tu, \textit{Uniqueness problem of meromorphic mappings in several complex variables for moving targets}, Tohoku Math. J. \textbf{54} (2002), 567--579.

\end{thebibliography}
\end{document}